\newcommand{\C}{\mathbb C}
\newcommand{\R}{\mathbb R}
\newcommand{\F}{\mathcal F}
\newcommand{\M}{M}
\newcommand{\T}{^\mathsf{T}}
\newcommand{\E}{\mathbb E}
\newcommand{\inv}{^{-1}}
\newcommand{\trace}{\mathrm{trace}}
\newcommand{\Rtrace}{\mathrm{trace}_\Re}
\newcommand{\rmd}{\mathrm{d}}
\newtheorem{theorem}{Theorem}
\newtheorem{lemma}{Lemma}
\newtheorem{prop}{Proposition}
\begin{document}

\title{Fixed Point Iterations for SURE-based PSF Estimation for Image Deconvolution}

\author{Toby Sanders}
\date{}



\maketitle

\begin{abstract}
	Stein's unbiased risk estimator (SURE) has been shown to be an effective metric for determining optimal parameters for many applications. The topic of this article is focused on the use of SURE for determining parameters for blind deconvolution. The parameters include those that define the shape of the point spread function (PSF), as well as regularization parameters in the deconvolution formulas. Within this context, the optimal parameters are typically determined via a brute for search over the feasible parameter space. When multiple parameters are involved, this parameter search is prohibitively costly due to the curse of dimensionality. In this work, novel fixed point iterations are proposed for optimizing these parameters, which allows for rapid estimation of a relatively large number of parameters. We demonstrate that with some mild tuning of the optimization parameters, these fixed point methods typically converge to the ideal PSF parameters in relatively few iterations, e.g. 50-100, with each iteration requiring very low computational cost.
\end{abstract}

\section{Introduction}
Image deconvolution is a highly ill-conditioned inverse problem in which an image has been degraded by convolution with a point-spread function (PSF). The inverse problem involves some \emph{undoing} of the convolution via a regularized deconvolution algorithm. When the PSF is known exactly, a large variety of algorithms exist to handle this problem. However, for many deconvolution problems, the PSF is unknown and therefore must also be estimated, either prior to or in conjunction with the deconvolution algorithm. When the PSF is unknown the problem is known as \emph{blind} deconvolution.

The general form of the blind deconvolution problem in this article is to recover an image $u_0 \in \R^{N}$ from its blurry version, $b$, given by
\begin{equation}\label{eq: convModel}
	b = h_0*u_0 + \epsilon,
\end{equation}
where $*$ denotes 2D convolution, $h_0$ is the (unknown) PSF, and $\epsilon$ is inherent noise. To begin to solve this problem, some prior assumptions and/or constraints must be put to action. Some common models implement prior assumptions about the structure of the image and PSF by using regularization norms \cite{fergus2006removing,cho2009fast}. 
Such models are usually non-convex, but pragmatic estimates can be obtained by alternating minimization over estimates of $h_0$ and $u_0$. Throughout this article we will refer to these estimates as $h$ and $u$, respectively. Other priors about the PSF may be contained in the imaging acquisition domain. For example, in telescope imaging and light microscopy, a parametric model for the PSF is designed based on the model of the lense aperture \cite{holmes1992blind, paxman1992joint, sanders2021real}. Some parameters in the model for the PSF are left as free variables, and an alternating optimization approach is used again by alternatively optimizing over the image and PSF.

This article focuses on the problem of estimating the PSF, whenever an imaging modality-based model for the PSF is essentially unknown. In other words, the source of the blur may be considered arbitrary. This work follows on the effective  approach proposed by Xue and Blu \cite{xue2014novel}, where a parametric model is used for the PSF, e.g. a symmetric Gaussian PSF with unknown variance. The parameter is found by minimizing the Stein's unbiased risk estimator (SURE), which is an estimator for the \emph{blurred} square error. Hence the optimal parameter is determined by minimizing SURE as the objective function. In this sense, once the optimal PSF is determined, any more sophisticated non-blind deconvolution algorithm may be put to use to obtain the final deconvolved image. Hence, for the purposes of this article, the problem of recovering an \emph{ideal} deconvolved image $u$ is considered a separate exercise performed posterior to estimating the PSF, which the reader may refer to the extensive literature on (see e.g. \cite{dabov2008image, romano2017little, zhang2017learning}). 

In the work of Xue and Blu \cite{xue2014novel}, the optimal parameters are found essentially by brute force search over the parameter space, with some mildly more involved methods used when searching for the optimal PSF variance and regularization parameter. Nevertheless, performing a manual search over the parameter space means that this approach is prohibitively limited to the number of parameters that can be used, due to the curse of dimensionality. To this end, this article proposes novel iterative methods for minimizing SURE over the parameter space, namely through fixed point iterations derived from the SURE objective function. We demonstrate that these fixed points generally converge in 50-100 iterations with some mild tuning of the optimization parameters. Moreover, the iterations are extremely lightweight and fast. Using this approach, it is demonstrated that up to 4 free parameters may be solved for simultaneously, while more are likely possible. These parameters include the values for the PSF variances, a rotational parameter $\theta$ for the PSF orientation, and an optimal regularization parameter. 

Provided in the next section is an outline of the technical background material needed for the approach. Namely, SURE is clearly defined, an important result from Xue and Blu is revisited, regularization operators are briefly discussed, and finally a general outline of the computational practices used throughout the article is described. Section 3 provides the main mathematical results in this article and the resulting fixed point equations for the PSF parameters. Section 4 describes in detail several useful parametric models for PSFs. Finally, Section 5 presents the numerical results and the mild empirical optimization tuning that was implemented.

\section{Preliminaries}  
We assume $\epsilon$ in (\ref{eq: convModel}) is i.i.d. mean zero Gaussian with variance $\sigma^2$, though the methodology still applies so long as the covariance matrix of $\epsilon$ is known \cite{eldar2008generalized}. It is often convenient to write the convolution with a PSF $h$ in operator form as
\begin{equation}
	h*u = Hu,
\end{equation}
where $H \in R^{N \times N}$, and by the Fourier convolution theorem
\begin{equation}
	H = \F^{-1} \hat H \F .
\end{equation}
The operator $\F$ is the unitary Fourier transform operator.
The matrix $\hat H$ is diagonal and contains the Fourier transform values of $h$.

SURE provides a statistical estimate of the squared error between an estimate of the blurred solution $H u$ and the true noise free blurred image, $H_0u_0$ \cite{sanders2019notes,tibshirani2015stein}, which we write as
\begin{equation}
	\text{SURE}(u) := \E \| H_0 u_0  - H u \|_2^2.
\end{equation}
The statistical estimator is given by
\begin{equation}\label{eq: SURE}
	\text{SURE}(u) = -N\sigma^2 + \| H u - b\|_2^2 + {2\sigma^2} \sum_{j=1}^{N } \frac{\partial ( H u )_j}{\partial b_j }.
\end{equation}
To make the estimator computationally tractable, it is useful to implement linear Wiener filters for the inverse maps \cite{wiener1964extrapolation}, in which case (\ref{eq: SURE}) can be evaluated rapidly as shown below. The Wiener filter solution is given by \cite{wiener1964extrapolation,xue2014novel}
\begin{equation}\label{eq: Wie}
	u = W_H b = (H\T H + \sigma^2 V )^{-1} H\T b,
\end{equation} 
where $V = \E[ U\T U]^{-1} $. Here $U$ is the circulant matrix formed by putting circle shifted copies of $u_0$ into each column. Namely,
\begin{equation}
U_{ij} = (u_0)_{(i+j-1)mod N}.
\end{equation}
It is easy to see that 
$$
\F^{-1} U \F ,
$$
is a diagonal matrix containing the Fourier transform values of $u_0$. Therefore
$$
\F^{-1} V \F = \F^{-1} [U\T U]^{-1} \F = [ \F^{-1} U\T U \F ]^{-1},
$$
which we can observe is a diagonal matrix containing the inverse of the squared magnitude of the Fourier transform values of $u_0$.

For notational convenience in the proceeding work we define the symmetric positive definite matrix $M$ as
\begin{equation}\label{eq: SPDM}
	\M = (H\T H + \sigma^2 V)^{-1}.
\end{equation} 
The covariance matrix $V$ is not generally known in practice and is instead empirically designed (see below in Section \ref{sec: Reg}). Finally, given this linear Wiener filter solution, SURE in (\ref{eq: SURE}) simplifies to 
{\small{
\begin{equation}\label{eq: SURE-lin}
	\text{SURE}(u)
	  = -N \sigma^2 + \| H u - b\|_2^2 + {2\sigma^2} \trace (H M H\T ).	
\end{equation}
}}
It is very important to note the terms appearing in (\ref{eq: SURE-lin}), as well as similar expressions appearing later in this article, are very simple and fast to implement numerically. This is discussed in more detail in Section \ref{sec: comp}.

An important result from the work in \cite{xue2014novel} to justify minimizing the SURE criterion in (\ref{eq: SURE}) over the PSF parameters is given in the following.
\begin{theorem}\label{thmBlu}
	Consider only linear processings of the form $u = W_H b$ where $W_H$ is defined in (\ref{eq: Wie}), and suppose the true PSF operator is $H_0$. Then minimizing SURE in (\ref{eq: SURE}) with respect to $H$ over all possible convolution operators obtains a solution $H$ satisfying $H H\T  = H_0 H_0\T $.
\end{theorem}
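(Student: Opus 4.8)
The plan is to substitute the Wiener filter $u = W_H b$ directly into the statistical definition $\text{SURE}(u) = \E\|H_0 u_0 - Hu\|_2^2$, reduce everything to the Fourier (circulant) basis, and exploit the fact that the resulting objective is \emph{separable} across frequencies and depends on $H$ only through the squared magnitudes of $\hat H$. First I would write $Hu = H W_H b = A b$ with $A := H M H\T$, and use $b = H_0 u_0 + \epsilon$ together with the fact that $\epsilon$ is mean-zero and independent of $u_0$ to carry out the expectation. The cross term drops out, leaving
\begin{equation*}
	\text{SURE}(u) = \| (I - A) H_0 u_0 \|_2^2 + \sigma^2 \trace(A\T A).
\end{equation*}

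Next, since $H$, $M$, $V$, and hence $A$, are all circulant, they are simultaneously diagonalized by $\F$. Writing $\hat H_j$ for the Fourier values of $h$ and $p_j$ for the $j$-th diagonal entry of the diagonalization of $V$ (the inverse squared magnitude of the $j$-th Fourier value of $u_0$), a short computation gives the diagonal entries of $A$ as $\hat A_j = |\hat H_j|^2 / (|\hat H_j|^2 + \sigma^2 p_j)$, which are \emph{real} and depend on $H$ only through $g_j := |\hat H_j|^2$. Applying Parseval to both terms (using $|\widehat{H_0 u_0}_j|^2 = g_{0,j}/p_j$) then yields
\begin{equation*}
	\text{SURE}(u) = \sum_{j} \left[ (1 - \hat A_j)^2 \frac{g_{0,j}}{p_j} + \sigma^2 \hat A_j^2 \right],
\end{equation*}
where $g_{0,j} := |\hat H_{0,j}|^2$ are the eigenvalues of $H_0 H_0\T$.

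The crux of the theorem is that this is a sum of independent strictly convex quadratics, one per frequency, each a function of the single scalar $\hat A_j \in [0,1)$. Minimizing the $j$-th term by setting its derivative to zero gives the unique minimizer $\hat A_j^\star = g_{0,j}/(g_{0,j} + \sigma^2 p_j)$. Since $g_j \mapsto g_j/(g_j + \sigma^2 p_j)$ is a strictly increasing bijection from $[0,\infty)$ onto $[0,1)$, this optimal value is attained by exactly one choice of $g_j$, and solving shows it is $g_j = g_{0,j}$, i.e. $|\hat H_j| = |\hat H_{0,j}|$ for every $j$. This is precisely the frequency-domain statement of $H H\T = H_0 H_0\T$.

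The main obstacle is conceptual rather than computational: recognizing that $A$ — and therefore the entire objective — sees $H$ only through $|\hat H_j|^2$. This is what simultaneously makes the minimization trivial (separable convex quadratics with explicit minimizers) and explains why only $H H\T$, and not $H$ itself, is identifiable, since the phases of $\hat H_j$ are invisible to SURE. One should also verify the minor points that $\hat A_j^\star$ lies in the attainable range $[0,1)$ (guaranteed by $\sigma^2 p_j > 0$) and that the conjugate-symmetry constraint forcing $h$ to be real is automatically respected, since $g_{0,j}$ and $p_j$ inherit that symmetry.
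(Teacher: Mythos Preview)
The paper does not prove Theorem~\ref{thmBlu}; it is quoted as a result of Xue and Blu~\cite{xue2014novel} and used without proof. So there is no in-paper argument to compare against.

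That said, your argument is correct and is essentially how the original result is obtained. The circulant structure diagonalizes $A = HMH\T$ in the Fourier basis with real entries $\hat A_j = g_j/(g_j + \sigma^2 p_j)$, the risk $\E\|H_0 u_0 - Hu\|_2^2$ separates into independent one-dimensional strictly convex quadratics in $\hat A_j$, and the unique minimizer $\hat A_j^\star = g_{0,j}/(g_{0,j} + \sigma^2 p_j)$ is attained precisely when $g_j = g_{0,j}$ by the strict monotonicity of $g \mapsto g/(g+\sigma^2 p_j)$. Your remark that the objective sees $H$ only through $|\hat H_j|^2$ is exactly the mechanism behind both the tractability of the minimization and the non-identifiability of the phases, and your check that $\hat A_j^\star \in [0,1)$ (so that the optimum is actually attainable by some $g_j \ge 0$) closes the argument cleanly. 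One small point worth stating explicitly in a write-up: you are minimizing the \emph{risk} $\E\|H_0 u_0 - Hu\|_2^2$ rather than the data-dependent estimator in (\ref{eq: SURE}); for the linear Wiener processing with the ideal $V$ this is the intended reading, but a sentence making the identification would remove any ambiguity.
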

It is important to note that Theorem \ref{thmBlu} depends on knowing the ideal regularization matrix $V$, and we have empirically observed that setting $V = \lambda I$ obtains poor results. However, whenever $\sigma^2 V$ is reasonably approximated (see section below), we have found the results to be generally encouraging. In \cite{xue2014novel}, they use the approximation $\E | (\F u)_k |^2 \propto  1/k^2$, where $k$ is the wave number. In what follows below it is shown that our empirical Tikhonov regularization is quite similar, while having a more intuitive real domain interpretation.

\subsection{Regularization Operators}\label{sec: Reg}
As noted, the matrix $V$ appearing in (\ref{eq: Wie}) is generally unknown, and instead empirical operators are used. For our work we use the surrogate matrix that naturally arises out of the Tikhonov regularization model for image restoration, which is given by
\begin{equation}\label{eq: Tik}
	\begin{split}
		u &= \arg \min_v \| H v - b \|_2^2 + \lambda \| T v \|_2^2\\
		& = (H\T H + \lambda T\T T)^{-1} H\T b.
	\end{split}
\end{equation}
Here, $T$ is the regularization operator often set as a first or second order finite difference operator\cite{sanders2020effective,sanders2018multiscale}. For example in 1D, the first order finite difference operators is given as
\begin{equation}\label{eq: FD}
	T = \left[
	\begin{array}{ccccc}
		-1 &  1 & 0 & \dots & 0\\
		0 & -1 & 1 & \dots & 0\\
		\vdots & \vdots & \ddots  & &  \vdots \\
		0 & \dots & & -1 & 1 \\
		1 & \dots & & 0 & -1
	\end{array}\right],
\end{equation}
and higher order operators may be obtained by repeatedly applying the first order difference.
Observe that the solution to (\ref{eq: Tik}) is the same as that in (\ref{eq: Wie}) by setting $\sigma^2 V = \lambda T\T T$. For this reason we will typically write $\lambda T\T T$ in place of $\sigma^2 V$, hence $M$ appearing in (\ref{eq: SPDM}) is interchangeably given by
\begin{equation}\label{eq: Mtik}
	M = (H\T H + \lambda T\T T)^{-1} .
\end{equation}
The extension of these operators to 2D images is done naturally by taking differences in both the vertical and horizontal dimensions, and mathematically this can be handled by taking appropriate Kronecker products (see \cite{sanders2020effective} for details).

For deconvolution problems, there is a major computational advantage to writing the operators appearing in (\ref{eq: FD}) as circulant, making these operators convolutional operators. Hence, when $T$ is a $r$th order finite difference operator, then it is also diagonalized by the Fourier transform given by (see \cite{sanders2018multiscale,sanders2020effective} for details) 
\begin{equation}
	T\T T = \F^{-1} D \F,
\end{equation}
where $D$ is a diagonal matrix, and $D_{jj} = \sin^{2r}\left(\frac{ \pi (j-1) }{N} \right)$, for $j=1,2, \dots , N$.

\subsection{Computational Practices}\label{sec: comp}
Many of the formulas forthcoming in this article appear daunting computational tasks at first glance, e.g. the trace term in (\ref{eq: SURE-lin}). For such general large operators, the exact calculation of this trace would require massive matrix-matrix products (and inverses), and would need to instead be approximated using Monte-Carlo methods that are still prohibitively time consuming. However, in the practice of deconvolution problems, FFTs and the Fourier convolutional theorem can be leveraged to their fullest extent. These computational details are briefly discussed here, and it is presumed the reader can generalize these ideas to all other formulas appearing in this article. 

Suppose $A, B \in \R^{N\times N}$ are two arbitrary 2D convolutional operators (hence circulant matrices). Therefore, they have diagonalized representations as
\begin{equation}\label{eq: Ahat}
	A = \F^{-1} \hat A \F  \quad \text{and} \quad B = \F^{-1} \hat B \F,
\end{equation}
where $\F$ is the 2D unitary DFT operator. The matrices $\hat A$ and $\hat B$ are diagonal with entries given by the vectors $\hat a , \hat b \in \C^N$, respectively, defined by
\begin{equation}
	{\hat a} = \sqrt{N} \cdot \F \vec a_1 \quad \text{and} \quad {\hat b} = \sqrt{N} \cdot  \F \vec b_1,
\end{equation}
and $\vec a_1$ and $\vec b_1$ are the first columns of $A$ and $B$. Then it is easy enough to see that
$$
A B = \F^{-1} \hat A \hat B \F ,
$$
hence
\begin{equation}\label{eq: traceExample}
	\trace (A B) = \hat a\T \hat b .
\end{equation}
Observe this last equation appearing in (\ref{eq: traceExample}) is trivial to compute, requiring at most two FFTs to compute $\hat a$ and $\hat b$ and a very cheap dot product.
In a similar fashion, traces involving inverses are also very simple, e.g. 
$$
\trace (A^{-1} B ) = \sum_{j=1}^N \hat b_j / \hat a_j .
$$
Norms and inner products involving $A$ and $B$ can easily be computed using Parceval's theorem. For example, for two vectors $u, v\in \R^N$, with DFTs given by $\F u = \hat u$ and $\F v = \hat v$, then
\begin{equation}
	\begin{split}
		u\T A B v 
		& = u\T \F^{-1} \hat A \hat B\F  v \\
		& = (Fu)^H \hat A \hat B ( \F v)\\
		& = \sum_{j=1}^N  \hat a_j \hat b_j \overline{\hat u_j } \hat v_j .
	\end{split}
\end{equation}
This again requires at most two FFTs and simple Hadamard products and sums. Note however, for all of our algorithms in the forthcoming work, all necessary Fourier transforms may be evaluated prior to the iterative steps, hence the calculations just described simplify to only dot-product type calculations between vectors the same size as the original image.

\section{Fixed Point Methods for PSF Estimation}
The goal proceeding is to make use of SURE to optimize over certain model parameters for the forward operator, $H$. For example, $H$ may be modeled as a Gaussian PSF convolutional operator with an unknown variance. Then we may use SURE to optimize over this free parameter, let's call it $\gamma$, so $H = H(\gamma)$. Then minimizing (\ref{eq: SURE-lin}) over all $\gamma$ leads to the condition
\begin{equation}\label{ptheta}
	0 = \frac{\rmd}{\rmd \gamma} \left( \| H u - b \|_2^2 + 2\sigma^2 \text{trace}(H M H\T ) \right),
\end{equation}
where again $u  = \M  H\T b$.

Two important shorthand notations are introduced here. For any diagonalizable matrix $A \in \R^{N\times N}$, with a diagonalized form given by
\begin{equation}\label{eq: Rdef1}
	A = S^{-1} \Lambda S,	
\end{equation}
we denote
\begin{equation}\label{eq: Rdef2}
	\begin{split}
		A_\Re &:= S^{-1} \Re (\Lambda ) S,\\
		\Rtrace(A )& := \Re \left( \trace(A)\right)= \sum_{j=1}^N \Re\left( \Lambda_{jj} \right) ,
	\end{split}
\end{equation}
where for a complex number $z \in \C$, the operation $\Re (z)$ takes the real part only.
\begin{theorem}\label{prop: main}
	Suppose $H$ is a convolutional operator depending on an arbitrary parameter $\gamma$ with the derivative denoted by $H_\gamma := \frac{\rmd}{\rmd \gamma} H(\gamma)$. Then the value of $\gamma$ which minimizes the SURE criterion in the right-hand side of (\ref{eq: SURE-lin}) satisfies
	\begin{equation}\label{eq: Dgamma}
		\begin{split}
		0 &= 4 \sigma^2 \lambda \cdot {\Rtrace} (\M^2 T\T T H\T H_\gamma ) \\
		&+  4b\T (H \M H\T- I)\T ( HM H_\gamma\T)_\Re  (I - H M H\T)b,
		\end{split}
	\end{equation}
	and hence for any $p\in \R$,
		{\small{
	\begin{equation}\label{eq: gammaF}
		\gamma = \gamma
		\left( \frac{-\sigma^2 \lambda \cdot {\Rtrace} (M^2 T\T T H\T H_\gamma )}
		{b\T (H M H\T- I)\T (H M H_\gamma\T)_\Re (I - H M  H\T)b}\right)^p .
	\end{equation}
}}
\end{theorem}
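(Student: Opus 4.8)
The plan is to insert the Wiener solution $u = \M H\T b$ directly into (\ref{eq: SURE-lin}), so that the residual becomes $Hu - b = (H\M H\T - I)b$ and SURE reduces, up to the additive constant $-N\sigma^2$, to $\|(H\M H\T - I)b\|_2^2 + 2\sigma^2\,\tr(H\M H\T)$. Both surviving terms depend on $\gamma$ only through $H(\gamma)$ and through $\M = (H\T H + \lambda T\T T)\inv$, so carrying out the differentiation in (\ref{ptheta}) produces one contribution from the data term and one from the trace term; I would match these, respectively, to the second and the first summand of (\ref{eq: Dgamma}).

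The structural observation that makes every step explicit is that $H$, $H_\gamma$, $\M$, and $T\T T$ are all circulant, hence simultaneously diagonalized by $\F$ and mutually commuting. Writing $\hat h_j$ for the Fourier symbol of $H$, $d_j = \sin^{2r}(\pi(j-1)/N)$ for that of $T\T T$, and $m_j = (|\hat h_j|^2 + \lambda d_j)\inv$ for that of $\M$, each trace and each quadratic form collapses to a scalar sum over Fourier modes: $\tr(H\M H\T) = \sum_j m_j|\hat h_j|^2$, and by Parseval $\|(H\M H\T - I)b\|_2^2 = \sum_j (m_j|\hat h_j|^2 - 1)^2 |\hat b_j|^2$. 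The only differentiation facts I need are the scalar form of $\frac{\rmd}{\rmd\gamma}\M = -\M(H_\gamma\T H + H\T H_\gamma)\M$, namely $\frac{\rmd m_j}{\rmd\gamma} = -m_j^2\,\frac{\rmd}{\rmd\gamma}|\hat h_j|^2$, together with $\frac{\rmd}{\rmd\gamma}|\hat h_j|^2 = 2\,\Re(\overline{\hat h_j}\,\hat h_{\gamma,j})$.

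The algebraic engine is then the single identity $\lambda T\T T = \M\inv - H\T H$, i.e. $\lambda d_j m_j = 1 - m_j|\hat h_j|^2$. Differentiating the trace term yields $\frac{\rmd}{\rmd\gamma}(m_j|\hat h_j|^2) = 2\lambda d_j m_j^2\,\Re(\overline{\hat h_j}\hat h_{\gamma,j})$, whose sum over $j$, after restoring the $2\sigma^2$ prefactor, is precisely $4\sigma^2\lambda\,\Rtrace(\M^2 T\T T H\T H_\gamma)$; differentiating the data term and substituting $\lambda d_j m_j = 1 - m_j|\hat h_j|^2$ turns $\sum_j 2(m_j|\hat h_j|^2 - 1)\frac{\rmd}{\rmd\gamma}(m_j|\hat h_j|^2)|\hat b_j|^2$ into exactly the second summand. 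Recognizing these scalar sums as the stated operator expressions is where the real-part notation is forced: because $H$, $\M$, and $T\T T$ carry real symbols while $\hat h_{\gamma,j}$ is complex, the quantity that appears is $\Re(\overline{\hat h_j}\hat h_{\gamma,j})$, which is precisely the symbol of $(H\M H_\gamma\T)_\Re$ and the summand of $\Rtrace(\cdot)$. Adding the two contributions and setting the sum to zero gives (\ref{eq: Dgamma}).

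Finally, (\ref{eq: gammaF}) is immediate: (\ref{eq: Dgamma}) states that the two summands are negatives of one another, so after cancelling the common factor $4$ the bracketed ratio in (\ref{eq: gammaF}) equals $1$ at any minimizing $\gamma$; raising $1$ to the power $p$ leaves it $1$, and multiplying through by $\gamma$ reproduces $\gamma$, exhibiting the optimal parameter as a fixed point. The main obstacle is not any single computation but the conjugate/real-part bookkeeping: keeping track of which Fourier symbols are real, confirming that $b\T A b = \sum_j |\hat b_j|^2 \hat A_j$ and that $A\T$ corresponds to $\overline{\hat A}$, and verifying that the assembled scalar sums genuinely coincide with the $\Re$- and $\Rtrace$-decorated operator products claimed in the statement.
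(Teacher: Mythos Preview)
Your proof is correct and takes a somewhat different route from the paper. The paper works at the operator level throughout: it differentiates $\trace(MH\T H)$ and $\|(HMH\T - I)b\|_2^2$ via the product rule and the matrix identity $M_\gamma = -M(H\T H_\gamma + H_\gamma\T H)M$ (the operator version of your scalar $\tfrac{\rmd m_j}{\rmd\gamma}$), and then invokes a separate lemma (Lemma~\ref{lem: Rtrace}) stating that for circulant $A,B,C$ with $A$ having real eigenvalues one has $\trace(AB\T C)+\trace(AC\T B)=2\Rtrace(AB\T C)$ and $BAC\T + CAB\T = 2(BAC\T)_\Re$, which collapses the paired terms into the $\Rtrace$ and $(\cdot)_\Re$ forms. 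You instead diagonalize from the outset, so that every trace and quadratic form is a scalar sum over Fourier modes and the real part appears automatically from $\tfrac{\rmd}{\rmd\gamma}|\hat h_j|^2 = 2\Re(\overline{\hat h_j}\,\hat h_{\gamma,j})$; the substitution $\lambda d_j m_j = 1 - m_j|\hat h_j|^2$ is the scalar incarnation of the paper's $\lambda T\T T = M\inv - H\T H$. Your argument is more elementary and self-contained, since Lemma~\ref{lem: Rtrace} is absorbed into the Fourier calculus; the paper's version is more modular and keeps the matrix structure visible. One small remark: you assert that $H$ carries real Fourier symbols, which holds for the symmetric PSF models used later but is not assumed in the theorem statement; fortunately your computation never actually needs it, since $\Re(\hat h_j\,\overline{\hat h_{\gamma,j}}) = \Re(\overline{\hat h_j}\,\hat h_{\gamma,j})$ holds for arbitrary complex $\hat h_j$.
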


The proof of Theorem \ref{prop: main} is provided in the appendix. Equation (\ref{eq: gammaF}) is the basis for a fixed point iteration for $\gamma$, namely at the $k+1$ iteration the update on $\gamma$ takes the form
{\footnotesize{
\begin{equation}\label{eq: FPmain}
	\gamma_{k+1}  = \gamma_{k}
	\left( \frac{-\sigma^2 \lambda \cdot {\Rtrace} (M^2 T\T T H\T H_{\gamma_k} )}
	{b\T (H M H\T- I)\T (H M H_{\gamma_k}\T)_\Re  (I - H M  H\T)b}\right)^p	.
\end{equation}
}}
Alternatively, equation (\ref{eq: Dgamma}) may be used as a single-variable gradient descent method for determining $\gamma$, that is
{\small{
\begin{equation}\label{eq: FPGD}
	\begin{split}
	\gamma_{k+1} &= \gamma_k - 4 \tau_k (  \sigma^2 \lambda \cdot {\Rtrace} (\M^2 T\T T H\T H_{\gamma_k} ) \\
	&+  b\T (H \M H\T- I)\T (HM H_{\gamma_k}\T)_\Re (I - H M H\T)b) ,
	\end{split}
\end{equation}
}}
for some $\tau_k>0$.

\subsection{A fixed point method for finding $\lambda$} 
Since we do not have access to the true power spectrum of $u$ given in (\ref{eq: Wie}) as $V$, we use the surrogate approximation of $\lambda T\T T$ from (\ref{eq: Tik}) in place of $\sigma^2 V$ in (\ref{eq: Wie}). To that end, it is also desirable to optimize (\ref{eq: SURE-lin}) over $\lambda$, where $M$ is a function of $\lambda$ given by
\begin{equation}
	M(\lambda ) = (H\T H + \lambda T\T T)^{-1}.
\end{equation}
The following is the main result needed for the fixed point algorithm for estimating $\lambda$ to further optimize SURE.

\begin{theorem}\label{lambda-theorem}
	Consider the Tikhonov regularized solution $u$ given in (\ref{eq: Tik}). Then the optimal $\lambda$ for this solution which minimizes the SURE criterion in (\ref{eq: SURE-lin}) satisfies
	\begin{equation}
		\lambda = \frac{\sigma^2 \trace(H \M T\T T  \M H\T )}
		{b\T H \M T\T T \M T\T T \M H\T b},
	\end{equation}
\end{theorem}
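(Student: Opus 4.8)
The plan is to minimize the linearized SURE criterion in (\ref{eq: SURE-lin}) by directly differentiating with respect to $\lambda$ and setting the derivative to zero, treating $\M = \M(\lambda)$ as the only $\lambda$-dependent quantity. The one indispensable ingredient is the derivative of the matrix inverse: since $\M\inv = H\T H + \lambda T\T T$, we have $\frac{\rmd}{\rmd\lambda}\M\inv = T\T T$, and hence
\begin{equation}
	\frac{\rmd \M}{\rmd\lambda} = -\M T\T T \M .
\end{equation}
I would also note at the outset that $\M$ is symmetric, so $H\M H\T$ is symmetric as well; this lets me avoid carrying transposes through the data-fidelity term.

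With $u = \M H\T b$, the residual is $Hu - b = (H\M H\T - I)b$, so the data-fidelity term equals $b\T (H\M H\T - I)\T (H\M H\T - I) b$. Differentiating it and using the identity above gives
\begin{equation}
	\frac{\rmd}{\rmd\lambda}\| Hu - b\|_2^2 = -2\, b\T (H\M H\T - I)\, H\M T\T T \M H\T b ,
\end{equation}
while the trace term contributes $2\sigma^2 \trace\!\left(H \tfrac{\rmd\M}{\rmd\lambda} H\T\right) = -2\sigma^2 \trace(H\M T\T T \M H\T)$. Setting the total derivative to zero and dividing by $-2$ produces the stationarity condition
\begin{equation}
	0 = b\T (H\M H\T - I)\, H\M T\T T \M H\T b + \sigma^2 \trace(H\M T\T T \M H\T) .
\end{equation}

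The main obstacle is to reconcile the first term with the claimed denominator, and this is where the single clever step enters. From $\M\inv = H\T H + \lambda T\T T$ one obtains $\M H\T H = I - \lambda \M T\T T$, so that after grouping $H\M H\T H$ first,
\begin{equation}
	H\M H\T H\M T\T T \M H\T = H\M T\T T \M H\T - \lambda\, H\M T\T T \M T\T T \M H\T .
\end{equation}
Substituting this into the expansion of $b\T (H\M H\T - I) H\M T\T T \M H\T b$ causes the two copies of $b\T H\M T\T T \M H\T b$ to cancel exactly, leaving only $-\lambda\, b\T H\M T\T T \M T\T T \M H\T b$. The stationarity condition then collapses to $\lambda\, b\T H\M T\T T \M T\T T \M H\T b = \sigma^2 \trace(H\M T\T T \M H\T)$, and solving for $\lambda$ yields the stated formula. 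I expect the bookkeeping in this expansion to be the only delicate part, since the cancellation hinges on correctly grouping $H\M H\T H$ before applying the identity $\M H\T H = I - \lambda \M T\T T$.
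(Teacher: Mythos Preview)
Your proof is correct and follows essentially the same route as the paper: differentiate the data-fidelity and trace terms using $\tfrac{\rmd M}{\rmd\lambda}=-MT\T TM$, then invoke the identity $MH\T H = I - \lambda MT\T T$ (equivalently the paper's substitution $H\T H = M^{-1}-\lambda T\T T$) to collapse the quadratic term to $-\lambda\, b\T HMT\T T M T\T T M H\T b$, and solve for $\lambda$. The only cosmetic difference is that you explicitly justify dropping the transpose by noting $HMH\T$ is symmetric, whereas the paper writes $(HMH\T - I)^2$ directly.
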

This theorem is also proven in the appendix and is the basis for a fixed point iteration for $\lambda$ given as
\begin{equation}\label{eq: FPlambda}
	\lambda_{k+1} = \frac{\sigma^2 \trace(H \M_k T\T T  \M_k H\T )}
	{b\T H \M_k T\T T \M_k T\T T \M_k H\T b},
\end{equation}
where it is implied that $M_k = M(\lambda_k)$.

\section{Parametric PSFs}

In the convention that follows, all images (and PSFs) are considered to be in $\R^{m\times n}$, where $N = m \cdot n$. We also assume $m$ and $n$ are even. When either of the image dimensions are not even, the claims change only very mildly, and the reader can easily modify the equations accordingly. Sticking to this convention keeps the exposition simpler. In the associated algorithms the difference between odd and even dimensions are handled with only a few "if" statements.

In what follows, the indexing for the PSFs is written as
\begin{equation}\label{eq: hxy}
	\begin{split}
		h(x,y) , \quad  x &= -n/2 , -n/2 + 1, \dots , n/2 -1,  \\
		y &= -m/2 , -m/2 + 1 , \dots , m/2 -1.
	\end{split}
\end{equation}
Similarly, the indexing for the discrete Fourier transform (DFT) of the PSF is written as
\begin{equation}
	\begin{split}
		\hat h(k_x , k_y ) , \quad  
		n\cdot k_x &= -n/2 , -n/2 + 1, \dots , n/2 -1,  \\
		m\cdot k_y &= -m/2 , -m/2 + 1 , \dots , m/2 -1,
	\end{split}
\end{equation}
where 
\begin{equation}
	\hat h(k_x ,k_y) = \sum_{x = -n/2}^{n/2 -1 } \sum_{y = -m/2}^{m/2 - 1} h(x,y) e^{-i{2\pi} ( x {k_x} + y {k_y})} .
\end{equation}

In the remainder of this section some useful PSF models with parameters to optimize are described. The PSFs are written with a "proportional to" symbol, $\propto$, to indicate that an additional constant is needed so that they are always normalized for the pixels to sum to one. Since the whole optimization is solved in Fourier domain, in practice it is actually only the DFT of the PSF that is needed. 

\subsection{Gaussian PSF Parameters}
Some of the simplest and most useful parametric PSFs come from Gaussian distributions. The \emph{non-angled} form of an anisotropic Gaussian distribution given by
\begin{equation}\label{eq: Gaus1}
	h({x},y; \omega_x , \omega_y ) \propto  \exp \left( -\frac{x^2}{2\omega_x^2} - \frac{y^2}{2\omega_y^2}\right). 
\end{equation}
The standard deviations $\omega_x, \omega_y$ are parameters to solve for using the fixed point (\ref{eq: FPmain}). For the purpose of the optimization, its Fourier transform is given by
\begin{equation}\label{eq: hGaus1}
	\hat h( k_x , k_y ) = \exp \left( -{2 \pi^2} \left[ \left( {\omega_x k_x}\right)^2 + \left({\omega_y k_y}\right)^2 \right] \right),
\end{equation}
which are the eigenvalues of $H$ needed for the fixed point evaluation of (\ref{eq: FPmain}). 

The derivative also needed to evaluate the fixed point (\ref{eq: FPmain}) over $\omega_x$ is
\begin{equation}
	\frac{\partial}{\partial \omega _x} \hat h(k_x , k_y) = -\frac{4\pi^2}{n^2} k_x^2 \omega_x \hat h(k_x,k_y),
\end{equation}
and likewise for $\omega_y$. These are the eigenvalues of the operator $H_\gamma$ appearing in (\ref{eq: FPmain}), where before $\gamma$ was arbitrary, now $\gamma = \omega_x$.

An angled Gaussian PSF with angle $\theta$ is given by
\begin{equation}\label{eq: rot1}
	g(x,y; \omega_x , \omega_y , \theta) = h((x,y) Q_\theta\T ),
\end{equation}
where $h$ is given in (\ref{eq: Gaus1}) and 
\begin{equation}\label{eq: rot2}
	Q_\theta = \left[
	\begin{array}{cc}
		\cos \theta & \sin \theta \\
		-\sin \theta & \cos \theta 
	\end{array} \right].
\end{equation}
Hence one can additionally optimize over the parameter $\theta$. An example of such a PSF is shown in Figure \ref{fig: GausPSF}. The recovered PSF shown in this Figure is the estimated PSF using our fixed point algorithm in the example described later on in Figure \ref{fig: convAll}.

It is straightforward to show that the Fourier transform of this angled Gaussian is given by
\begin{equation}\label{eq: rot3}
	\hat g(k_x, k_y; \omega_x , \omega_y , \theta) = \hat h((k_x, k_y) Q_\theta\T ),
\end{equation}
where $\hat h$ is defined in (\ref{eq: hGaus1}). The derivatives over $\omega_x,  \omega_y$, and $\theta$, which are needed to obtain the eigenvalues of $H_\gamma$, are given by
\begin{equation}\label{eq: Gtheta}
	\begin{split}
		\frac{\partial}{\partial \omega _x} \hat g(k_x , k_y) 
		& = - 4\pi^2  \omega_x \left( {k_x} \cos \theta + {k_y} \sin \theta\right)^2  \hat g(k_x,k_y),\\
		\frac{\partial}{\partial \omega _y} \hat g(k_x , k_y) 
		& = - {4\pi^2} \omega_y \left(- {k_x} \sin \theta +  {k_y} \cos \theta \right)^2  \hat g(k_x,k_y) , \\
		\frac{\partial}{\partial \theta} \hat g(k_x , k_y) 
		& = -4 \pi^2 (\omega_x^2 - \omega_y^2 )\left( {k_x} \cos \theta  +  {k_y}\sin \theta \right)\cdot \\
& \quad		\left( -{kx} \sin \theta + {k_y} \cos \theta\right) \hat g(k_x , k_y).
	\end{split}
\end{equation}

\begin{figure}
	\centering
	\includegraphics[width=.5\textwidth]{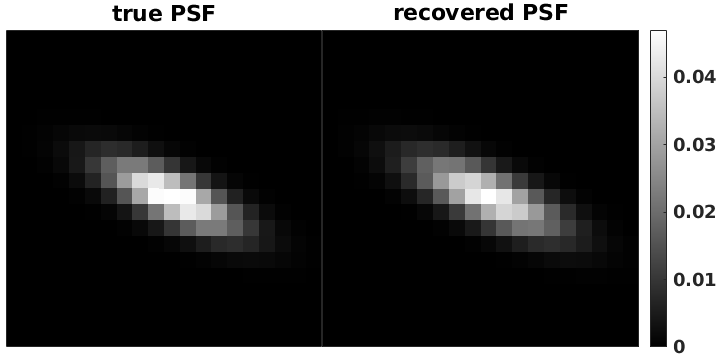}
	\caption{A Gaussian PSF with $\omega_x  = 3$, $\omega_y = 1$, and $\theta=25\degree$. On the left is the true PSF and on the right is the recovered PSF using the fixed point algorithm described in the example in Figure \ref{fig: convAll}.}
	\label{fig: GausPSF}
\end{figure}

\subsection{PSFs Defined in Fourier Domain}
To evaluate the fixed point iterations in (\ref{eq: FPmain}), in practice one only needs the form of the DFT of the PSF instead of the real space domain PSF definition. The following PSF is defined explicitly in Fourier domain:
{\small{
\begin{equation}\label{eq: LapFPSF}
	\hat h(k_x , k_y ; \alpha_x , \alpha_y) =
	\frac{1}{1 +  \alpha_x \sin^2 (\pi k_x)}\,
	\frac{1}{1 +  \alpha_y \sin^2 (\pi k_y)}.
\end{equation}
}}
In the proposition below it is shown that the function defined in (\ref{eq: LapFPSF}) is apparently the DFT of a Laplacian distribution. We feel this result is generally unknown so we include it here, while still emphasizing that other arbitrary models could be designed strictly in Fourier domain.
\begin{prop}\label{prop: Lap}
	Let $h$ be a normalized discrete 1D Laplace distribution given by
	\begin{equation}
		h(x) \propto \exp(-\beta |x|),
    \end{equation}
	for $x = -n/2 , - n/2 +1 , \dots , n/2-1.$
	Then the DFT of $h$ is given by
	\begin{equation}
		\hat h(k) = \frac{1}{1+ 4 \alpha \sin^2(\pi k )} + O(e^{-Bn/2}),
	\end{equation}
	where $\alpha = \frac{\exp({-\beta})}{(1-\exp({-\beta}))^2}$.
\end{prop}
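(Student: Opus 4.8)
The plan is to recognize $\hat h(k)$ as a truncation of the Fourier series of the \emph{bi-infinite} two-sided geometric sequence and to evaluate that series in closed form, treating the finite index range as the source of the exponentially small error. Write $r := e^{-\beta} \in (0,1)$ and let $C$ denote the normalizing constant, so that $h(x) = C\, r^{|x|}$ on $x = -n/2, \dots, n/2-1$. First I would replace the finite sum defining $\hat h(k) = \sum_x h(x)\, e^{-i2\pi x k}$ by the sum over all $x \in \Z$; since $h$ decays like $r^{|x|}$, the discarded tail is bounded by $\sum_{|x| \ge n/2} r^{|x|} = O(r^{n/2})$, which is exactly the claimed $O(e^{-Bn/2})$ with $B = \beta$.

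The central computation is the evaluation of the two-sided geometric series
\[
  \sum_{x=-\infty}^{\infty} r^{|x|}\, e^{-i2\pi x k}
  = 1 + 2\sum_{x=1}^{\infty} r^x \cos(2\pi k x)
  = \frac{1 - r^2}{1 - 2r\cos(2\pi k) + r^2},
\]
the standard Poisson-kernel identity, obtained by summing $\sum_{x\ge 1}\bigl(r\, e^{i2\pi k}\bigr)^x$ as an ordinary geometric series and taking real parts. The half-angle identity $1 - \cos(2\pi k) = 2\sin^2(\pi k)$ then rewrites the denominator as $(1-r)^2 + 4r\sin^2(\pi k)$. It remains to fix the normalization: a normalized PSF satisfies $\sum_x h(x) = 1$, equivalently $\hat h(0) = 1$, so $C$ is the reciprocal of the series at $k=0$, giving $C = (1-r)/(1+r)$ up to the same $O(r^{n/2})$ correction. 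Multiplying the closed form by $C$ and using $1 - r^2 = (1-r)(1+r)$ collapses the numerator to $(1-r)^2$, yielding
\[
  \hat h(k) = \frac{(1-r)^2}{(1-r)^2 + 4r\sin^2(\pi k)}
  = \frac{1}{1 + 4\alpha \sin^2(\pi k)},
  \qquad \alpha = \frac{r}{(1-r)^2} = \frac{e^{-\beta}}{(1-e^{-\beta})^2},
\]
which is the asserted identity.

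The main obstacle, and the only place where genuine care is needed, is the error analysis rather than the algebra. Two approximations must be combined: the truncation of the DFT sum in the numerator, and the corresponding truncation hidden in the normalizing constant $C$. I would bound each by the geometric tail, write $C_n = C_\infty\bigl(1 + O(r^{n/2})\bigr)$, and observe that since $S(k)$ and $C_\infty$ are bounded for fixed $\beta$ (and $|\hat h(k)| \le \sum_x |h(x)| = 1$), the product of the two corrected factors is $C_\infty S(k) + O(r^{n/2})$. A minor bookkeeping point is the asymmetry of the index set $\{-n/2, \dots, n/2-1\}$, which carries one extra term on the negative side; this contributes only the single term $r^{n/2}$ and is absorbed into the same $O(e^{-Bn/2})$ bound. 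Since $\alpha$ is a fixed positive constant for fixed $\beta$, dividing by the denominator does not amplify the additive tail uniformly in $k$, so the bound passes cleanly through the normalization.
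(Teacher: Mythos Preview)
Your proof is correct and follows essentially the same route as the paper: extend the finite DFT sum to a bi-infinite geometric series at the cost of an $O(e^{-\beta n/2})$ tail, sum it in closed form, rewrite the denominator via $1-\cos(2\pi k)=2\sin^2(\pi k)$, and fix the normalization by $\hat h(0)=1$. Your presentation is slightly cleaner (citing the Poisson-kernel identity directly rather than rederiving it from two one-sided geometric sums) and you are more explicit than the paper about how the error in the normalizing constant $C$ combines with the truncation error in the sum, which the paper leaves implicit.
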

The proof involves a detailed calculation and is given in the appendix. A straightforward extension of the Proposition to 2D tells us that (\ref{eq: LapFPSF}) is essentially a Laplacian PSF with variances related to $\alpha_x$ and $\alpha_y$. In the same fashion as the Gaussian PSF, for more generalized blind deconvolution an angular component can be added to the PSF parameters as in (\ref{eq: rot1}) - (\ref{eq: rot3}) to obtain the rotated version given in Fourier domain as
\begin{equation}\label{eq: LapPSFangle}
	\hat g(k_x , k_y ; \alpha_x , \alpha_y, \theta):= 
	\hat h((k_x, k_y)Q_\theta\T),
\end{equation}
where $h$ was defined in (\ref{eq: LapFPSF}). The derivatives of the PSF are provided in the appendix.

A similar alternative defined directly in the Fourier domain would be 
\begin{equation}
	\hat h (k_x , k_y) = (1 + \alpha_x k_x^2 + \alpha_y k_y^2)^{-1} ,
\end{equation}
along with its rotated variants. The author has not explored this option, but it seems just as reasonable as using a Gaussian or Laplace PSF model.

\subsection{Combinations of PSFs}
Consider a series of PSFs that may be combined linearly to estimate the final PSF, namely
\begin{equation}
	h (x,y; \{ c_j \}_j )= \sum_{j=1}^K c_j h_j(x,y) ,
\end{equation}
where $h_j$ are unique PSFs. The coefficients satisfy $c_j\ge 0$ and $\sum_j c_j = 1$. The free parameters to optimize over are the coefficients $c_j$, and the derivatives needed for the fixed point iterations are trivial. After updating all of the $c_j$ coefficients in each iteration, they may be scaled to ensure they satisfy the normalization constraints. Using this model, the $h_j$ may be viewed as a basis for PSF space. 


One very interesting case we have begun to explore using this model is to randomly generate a large number of PSFs, $\{h_j\}_{j=1}^K$, where for example $K=50$ and $h_j$ are each unique Gaussian PSFs as in (\ref{eq: rot1}). The form of this PSF is very general, and the fixed point iterations are used to sort out the weighting factors, $c_j$. The shape of the resulting PSFs by combining many angled Gaussian PSFs can be far more general than simply one parametric angled Gaussian. This approach is not explored in this article, but rather saved for future work due to time and space.

\section{Numerical Results}
The numerical results presented in this section include solving for the \emph{angled} Gaussian and Laplacian PSF parameters. Namely, these are the PSFs defined in (\ref{eq: rot1}) and (\ref{eq: LapFPSF}), and the free parameters to solve for are $\omega_x, \omega_y, \alpha_x, \alpha_y,$ and $\theta$, as well as the regularization parameter $\lambda$. The derivatives with respect to these free parameters needed evaluate the fixed point equations are given explicitly in (\ref{eq: Gtheta}) and (\ref{eq: Ltheta}).

\subsection{Testing Optimization Parameters}\label{sec: params}
To properly solve for the PSF model parameters, first there are very basic optimization parameters to choose within the fixed point iterations. The optimization parameters determined to be most suitable are summarized together in Table 1 for clarity.

\begin{table}[ht]
	\centering
	\caption{Optimization parameters chosen for each deconvolution model parameter.}
	\begin{tabular}{|l|l|}
		\hline 
		$\omega_x,\omega_y$ & Equation (\ref{eq: FPmain}) used with $p=0.25$.\\
		$\alpha_x , \alpha_y$ & Equation (\ref{eq: FPmain}) used with $p=2$.\\
		$\{ c_j\}_j$ & Equation (\ref{eq: FPmain}) used with $p = -1/2$.\\
		$\theta$ & Equation (\ref{eq: FPGD}) used with spectral step for $\tau_k$.\\
		$\lambda$ & Equation (\ref{eq: FPlambda}) used exactly.\\
		\hline
	\end{tabular}
\end{table}

\begin{figure}
	\centering
	\includegraphics[width=0.45\textwidth]{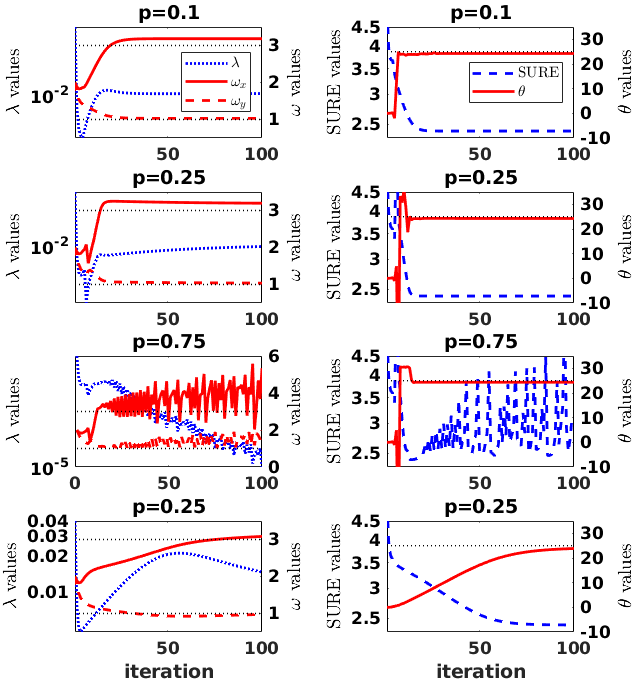}
	\caption{Comparison of the convergence of the fixed point iterations using different optimization parameters and the Gaussian PSF model. The true values of the PSF parameters are $\omega_x = 3,  \omega_y = 1$, and $\theta = 25$, as indicated in the plots by the thin dashed lines.}
	\label{fig: convAll}
\end{figure}

\begin{figure}
	\centering
	\includegraphics[width=0.5\textwidth]{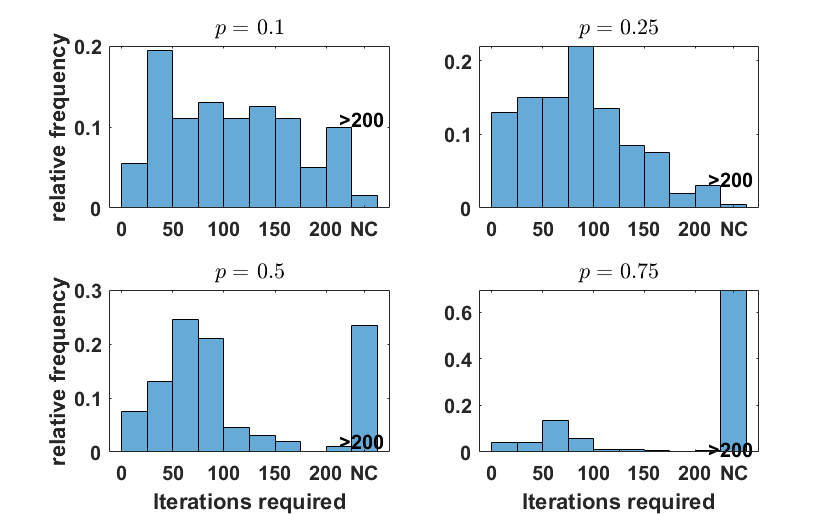}
	\caption{Analysis of the convergence of the fixed point iterations using different optimization parameters for the estimation of the Gaussian PSF parameters. The histogram values indicate the fraction of trials that required the specified number of iterations to converge. The last bar in the histogram indicates the fraction of trials that did not converge (NC), while the second to last histogram bar indicated the fraction that converged but required greater than 200 iterations.}
	\label{fig: histConv}
\end{figure}

In general, for the fixed point iteration in (\ref{eq: FPmain}), using values of $|p|<1$ are more likely to ensure convergence. However, within the range of convergent values of $p$, larger values will lead to faster convergence. Empirically, $p=0.25$ was found suitable for $\omega_x$ and $\omega_y$. For the angle parameter $\theta$, the fixed point equation (\ref{eq: FPmain}) was observed to be unstable for any value of $p$. Instead, the first order gradient descent was used as written in (\ref{eq: FPGD}). The spectral step was used for the step length $\tau$ \cite{barzilai1988two}. Finally, to further improve the convergence, a very small value of $p$ was used in the initial iterations while increasing it incrementally at each iteration until reaching the values just listed. This was not necessary to observe convergence, but rather to improve over-correction in early iterations with poor starting model parameters. The remainder of this section provides a detailed demonstration of the tuning for the Gaussian optimization parameters. The choices for the $\alpha$'s and $\{c_j \}_j$ are listed in Table 1 as a general guideline for the interested reader, based on some manual tuning by the author carried out in a similar fashion to the Gaussian case described below.

First, an example is provided to reveal detailed information within each iteration. This example is shown in Figure \ref{fig: convAll}, where a comparison of the convergence of the fixed point methods for different parameters is shown. The true values for the PSF were $\omega_x = 3$, $\omega_y = 1$, and $\theta = 25\degree$, and this PSF was shown in Figure \ref{fig: GausPSF} along with the recovered estimate from this example. The test image used was the classical \emph{monarch} image used widely in the image processing literature. The noise level, $\sigma$, was set based on a chosen SNR set at 60, where we define the SNR as the mean value of the blurry image divided by $\sigma$.  Each row in the figure shows the convergence of the algorithm for different optimization parameters. The exponent $p$ in (\ref{eq: FPmain}) chosen for each row (going from top to bottom) were $p=$ 0.1, 0.25, 0.75, and 0.25. In the top three rows, the spectral step was used for $\tau$ in the $\theta$ optimization, while the bottom used a fixed step length. Observe that for $p\le 0.25$, all cases converge, while the value $p=0.75$ diverges. The last case that used a fixed step length for $\theta$ shows monotone, albeit slow convergence. The estimation of the optimal $\lambda$ is also provided in these plots, which indicate it settles near $10^{-2}$ in the convergent cases. It could be partly argued that this is the optimal value of $\lambda$, due to the monotone decrease of the SURE objective function at each iteration, which is also shown in the convergence plots. 

Shown in Figure \ref{fig: histConv} is the result from a large set of numerical examples analyzing the convergence of the fixed point iterations for different optimization parameters. In this example, 200 simulated trials were evaluated where the Gaussian PSF parameters and image SNR were randomly generated. The Gaussian standard deviations $\omega_x, \omega_y$ were chosen from a uniform distribution over the interval $[0,5]$, and $\theta$ was similarly chosen uniformly over $[-45\degree , 45\degree]$. For each case, the fixed point algorithms were evaluated using the different $p$ values, and for every example the initial starting values for the algorithm were $\omega_x = \omega_y = 2$ and $\theta = 0\degree$. The histogram values in the Figure indicate the fraction of trials that required the specified number of iterations to converge. The algorithm was considered "converged" when the absolute change in PSF parameter values in multiple subsequent iterations was less than $10^{-3}$. Observe that for $p=0.5$, already almost 25\% of cases do not converge, and when $p=0.75$, the majority of case diverge. On the other hand, comparing the cases of $p=0.1$ and $p=0.25$, nearly all cases converge, but $0.25$ typically required fewer iterations, and many of the cases for $p=0.1$ required greater than 200 iterations. To that end, it is straightforward to conclude that values near $p=0.25$ are optimal.

The convergence of these algorithms could likely be improved with further refinement. One could, for example, implement an accelerated fixed point method \cite{nesterov2003introductory,wang2013scaled}, as was done for the fixed points in \cite{sanders2021real}. Another alternative would be to use additional variable $p$ values for the exponent, based on some simple conditions. An even more detailed approach would be to use a second-order Newton-Raphson method, which requires detailed calculations of all second-order partial derivatives. These detailed calculations are provided in the supplementary material for reference, thought they have not been tested numerically.  Further refinement of the optimization will be considered for future work or for commercial applications.

\subsection{Accuracy of the Gaussian PSF Parameter Estimates}
Here we highlight the accuracy of the parameter values resulting from the simulations in Figure \ref{fig: histConv}, whereas before we were only interested in convergence. The values used are those recovered for the case $p=0.25$, as this was determined to be the most desirable optimization parameter. The scatter plots in Figure \ref{fig: GausScat} compares the true values of these randomly generated parameters with the values recovered using the fixed point algorithm (note that only the first 100 trials are shown to reduce clutter in the scatter plots). Hence, points near the diagonal line $y=x$ indicate accurate results. Notice $\omega_x $ and $\omega_y$ appear fairly accurate across all 100 trials. There is less consistency with the angle, but this can be explained by cases where $\omega_x \approx \omega_y$, in which case the angle is arbitrary. In other words, when $\omega_x=\omega_y$, the PSF is rotationally symmetric. This is indicated in the plot by the color of the dot, where dark dots represent points where $|\omega_x - \omega_y|$ is small. Note that only dark dots appear notably away for the origin. Finally in the bottom left the value of $\lambda$ recovered from our optimization is plotted against a value that was recovered by a brute for search for $\lambda$ and evaluating the SURE objective function each time. Observe that these two values are practically identical, indicating that our algorithm has converged to the optimal $\lambda$. Note here that this fixed point method for $\lambda$ could be used for many other inverse problems.

\begin{figure}
	\centering
	\includegraphics[width=.45\textwidth]{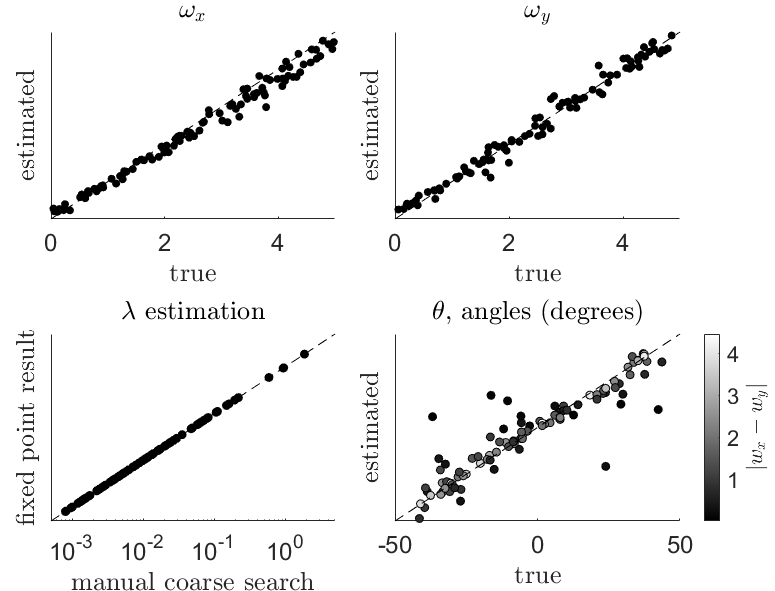}
	\caption{Accuracy of the estimated parameters from the large set of numerical simulations for Gaussian PSFs. Observe that $\omega_x$ and $\omega_y$ are almost always close to their true values, and the angle error is only large whenever the PSF is almost rotationally symmetric ($\omega_x \approx \omega_y$). The bottom left plot shows that $\lambda$ recovered from our algorithm is practically identical to that from using a brute force search.}
	\label{fig: GausScat}
\end{figure}

\subsection{Model Failure for Laplacian PSF Parameter Estimation} \label{sec: LapResults}
Similar to what was done with the Gaussian PSF estimation, the accuracy of the recovered PSF parameters from our algorithm was tested for the PSF form written in (\ref{eq: LapFPSF}), which we argued was essentially a Laplacian PSF. The parameters of interest are $\alpha_x$ and $\alpha_y$, and in each trial they were randomly generated from a uniform distribution over $[0 , 30]$, which roughly corresponded to PSFs with standard deviations within $[0,3.9]$, which can be deduced from Proposition \ref{prop: Lap}. When testing the algorithm, it was discovered that the recovered estimates of these parameters were inaccurate, and typically estimated far too large (see the open red circles in the plots in Figure \ref{fig: LapScatter}). It was determined this was not an issue with the optimization or convergence. In fact, it was observed that the SURE objective function was essentially monotonically decreasing through the iterations, even as the parameters converged to undesirable values. This effects comes from the SURE objective function. Recall that, loosely speaking, Theorem \ref{thmBlu} guarantees that the SURE model will lead us to an ideal PSF whenever estimates for $u$ take the form in (\ref{eq: Wie}), however, with the caveat that the ideal regularization is known (refer to equation (\ref{eq: Wie}) and the proceeding discussion for details).

To demonstrate this concept, the comparison between using the pragmatic Tikhonov regularizer and the ideal regularizer are shown in Figure \ref{fig: LapScatter}. The pragmatic regularization uses the version of $M$ defined in (\ref{eq: Mtik}), while the ideal regularization uses the form of $M$ given in (\ref{eq: SPDM}), where the true $V$ is provided. We are able to form the ideal regularization in these simulations because we have access to the true solution. Observe that the pragmatic Tikhonov regularization results in over-estimation of the parameters in most cases, while the ideal regularization cases are extremely accurate. One intuitive interpretation of this result is that SURE "prefers" a larger blur, so that applying the blur operator to the difference between the true and estimated solutions essentially \emph{wipes out} most of the error. Fortunately this did not occur in the Gaussian PSF case, which strongly indicates it will be a more useful model for real-data cases. One could attempt to ameliorate the Laplacian PSF estimation in several ways. One way, for example, would be a two-step procedure where the estimate for $V $ is provided after an initial image estimate, analogous to that in \cite{dabov2008image}. We have not explored these options and would simply advise one to \emph{stick with the Gaussian.}

\begin{figure}
	\centering
	\includegraphics[width=.45\textwidth]{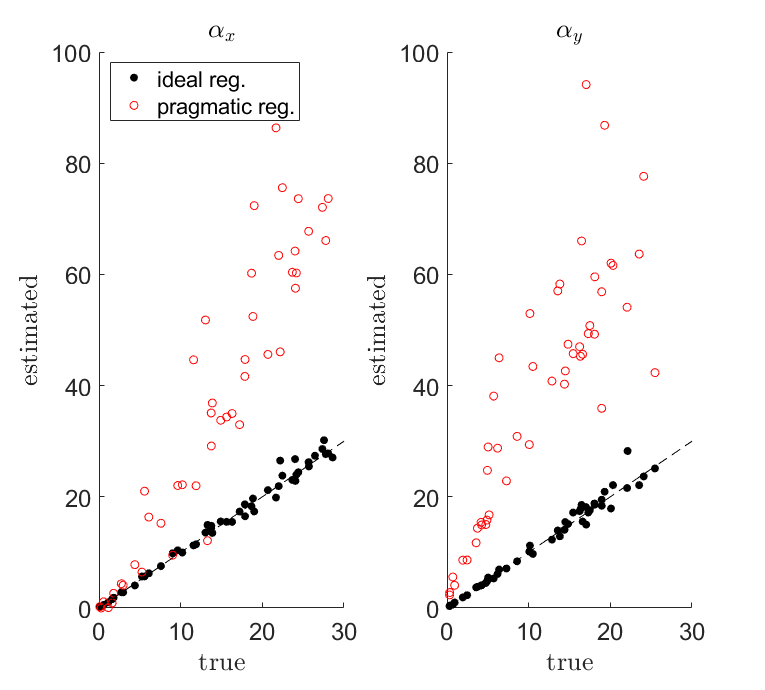}
	\caption{Comparison of recovered estimates of the Laplacian PSF parameters using the ideal regularization versus the pragmatic Tikhonov regularization. The Theorem in \cite{xue2014novel} only guarantees that the SURE criterion recovers the correct PSF when the ideal regularization is known. In this case with the Laplacian PSF, our optimization algorithm still performs well, but the SURE criterion fails when the pragmatic regularization is used.}
	\label{fig: LapScatter}
\end{figure}

\subsection{Real Image Data}
In this section, the numerical approach is demonstrated on an overhead RGB color image containing mild blur and noise using the Gaussian PSF model. The PSF parameters were estimated on the grayscale luminosity channel image, and then each of the RGB channels were deconvolved using the PSF recovered from the luminosity channel. The original image was acquired from a moving aircraft using a rolling shutter CMOS sensor. Since the aircraft was moving in the direction of the rolling shutter (from top of the image to bottom), it was predicted that the primary image blur is in the $y$-axis, hence anticipating that $\omega_y>\omega_x$. Additional inherent blur in the image is assumed resulting from various sources, including camera shake, finite aperture size, imperfect focus, etc. Moreover, the blur within the image is assumed to potentially be spatially variable, due to the rolling shutter capture and variable scene depth.

To deal with the spatially varying blur, a pragmatic approach was taken by first subdividing the image into small overlapping rectangular patches, most of size $256^2$. Each patch was processed and deconvolved independently, and the final image was attained by stitching all of the smaller patches back into a single image. The tiling of the small patches formed a $7\times 8$ tiling over the whole image, hence 56 patches total. The mean ($\mu$) and standard deviations ($\sigma$) of the recovered PSF parameters over all 56 patches is given below:

\begin{equation}
	\begin{split}
		\mu(\omega_x) &= 0.72, \quad \sigma(\omega_x) = 0.10\\
		\mu(\omega_y) & = 0.93, \quad \sigma(\omega_y) = 0.15\\
		\mu(\theta) & = 4.2\degree, \quad \,\,\,\, \sigma(\theta) = 10.3\degree.
	\end{split}
\end{equation}
Observe that the recovered values for $\omega_y$ are significantly larger, as anticipated due the the rolling shutter and aircraft movement. However, it is interesting to note that this additional blur in the $y$-axis cannot be observed visually by the author, though the algorithm can still parse it out. Stable convergence of our algorithm over a single image patch is demonstrated in Figure \ref{fig: realConv}.

\begin{figure}
	\centering
	\includegraphics[width=0.5\textwidth]{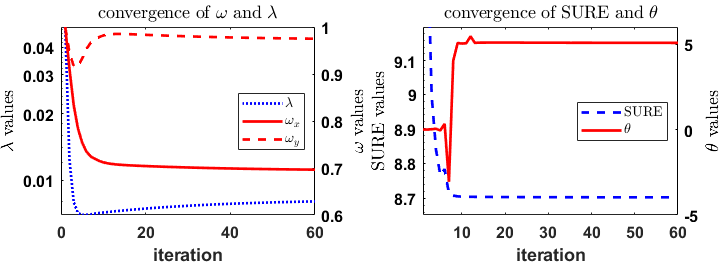}
	\caption{Convergence of the deconvolution parameters over a single image patch for the real data example. Observe that the SURE objective function values monotonically decrease.}
	\label{fig: realConv}
\end{figure}

The accuracy of these parameters is empirically demonstrated via the recovered image quality, which is shown in Figure \ref{fig: real} along with the original images. Our SURE algorithm was only used to quickly estimate the PSF parameters. Then, to obtain a high quality deconvolved image, a BM3D deconvolution approach was taken \cite{dabov2008image}, where the fast GPU-based version of the algorithm was used \cite{sanders2021new}. To further improve the image quality, a super resolution modification of the algorithm was implemented which increases the resolution by a factor of 2 in each dimension. This was useful due to the observed pixelation. The iterative plug and play prior approach with BM3D was also tested \cite{sreehari2016plug, zhang2017learning}, but our empirical tests found no improvement in image quality over the more straightforward BM3D approach, which requires far less computational time. The deconvolved images seen in Figure \ref{fig: real} show substantially improved image quality over the original images. Namely, the pixelation is reduced, image sharpness is improved, and image noise is removed.

\section{Conclusions}
Optimization techniques were proposed and tested for rapid estimation of a large number of PSF parameters with SURE as the underlying objective function. These optimization techniques make it possible to solve for far more PSF parameters using the SURE criterion than what was previously done. It was demonstrated that for properly chosen optimization parameters, the algorithms tend to converge in less than 200 iterations. For Gaussian PSFs, the simulations indicated that the SURE approach combined with the optimization effectively recovered accurate PSF estimates, as well as ideal regularization parameters. This was empirically demonstrated on real image data, where high quality super resolved image was attained using a BM3D-based deconvolution approach with the Gaussian PSF obtained from our SURE algorithm as the input.

For Laplacian PSFs, while the optimization converged to optimal parameters according to the SURE criterion, the parameters were often very inaccurate. This was explained by the lack of access to the ideal regularization for the Wiener filter and highlighted a current shortcoming with the SURE approach for certain PSF models. Other parametric PSF models were suggested in Section 4 that could be tested to extend the application of the optimization techniques beyond what was demonstrated here. Namely, a more general approach was proposed using linear combinations of PSFs as a basis for the final PSF, which leads for far more general PSF shapes.

There is potential room for further improvement and tuning in the optimization. For example, a second order Newton-Raphson method may prove useful, whose necessary ingredients are derived in the appendix.

\begin{figure}[ht!]
	\centering
	\includegraphics[width=0.5\textwidth]{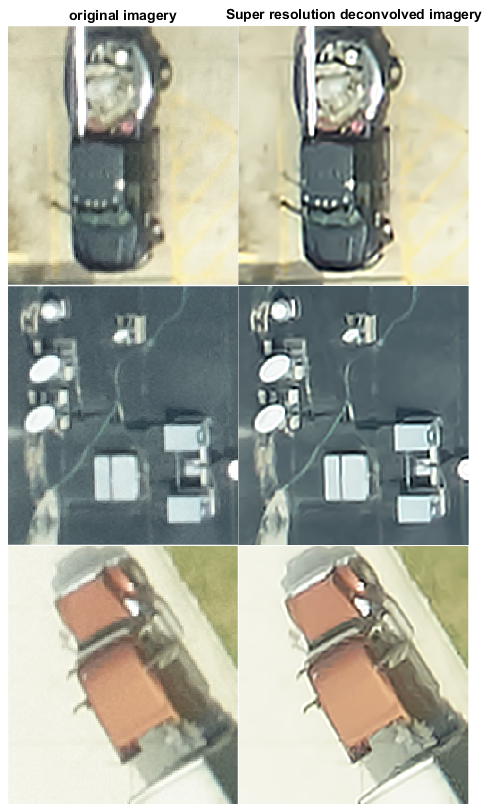}
	\caption{Super resolution deconvolution results on real image data. Three small image patches are compared between the original and the restored image. The top image shows a pickup truck with objects in the truck bed, the middle shows some electrical appliances on top of a building, and the bottom shows a semi-truck. Note that in the restored image, the pixelation is reduced, image sharpness is improved, and image noise is removed. } 
	\label{fig: real}
\end{figure}

\section*{Acknowledgments} 
I want to thank Christian Dwyer, Ulugbek Kamilov, Sean Larkin, and Rodrigo Platte for stimulating and useful discussions relating to this work. I would also like to thank Scott Merritt and the Surdex Corporation for providing the image data. Finally, thanks to the late Robert D. Skeel, whose mathematical techniques inspired this work, and for initially suggesting the idea that is now found in Theorem \ref{lambda-theorem} of this article.

\appendix

\section*{Proof of Theorem \ref{prop: main}}
In this section of the appendix, the main result from Theorem \ref{prop: main} is proven. First Lemmas \ref{Mlemma} and \ref{lem: Rtrace} are needed. The reader should note that the notation introduced in (\ref{eq: Rdef2}) is used extensively below.

\begin{lemma}\label{Mlemma}
	Consider an arbitrary square non-singular matrix $A$ that is dependent upon a parameter $\gamma$.  Then the derivative of the inverse of $A$ with respect to $\gamma$ satisfies
	$$
	\frac{\rmd  }{ \rmd \gamma} A\inv = - A\inv \frac{\rmd A } {\rmd \gamma}  A\inv .
	$$
\end{lemma}
\begin{proof}
	First noting that on one hand
	$$
	\frac{\rmd }{\rmd \gamma} A\inv A = \frac{\rmd}{\rmd \gamma} I = 0,
	$$
	and on the other hand using the product rule
	$$
	\frac{\rmd }{\rmd \gamma} A\inv A = \frac{\rmd A\inv }{\rmd \gamma }A + A\inv \frac{\rmd A }{\rmd \gamma }.
	$$
	Setting the last expression to zero and solving for $\frac{\rmd  }{ \rmd \gamma} A\inv $ completes the proof.
\end{proof}

\begin{lemma}\label{lem: Rtrace}
	Suppose the matrices $A,B,C \in \R^{N\times N}$ are all convolutional operators and that $A$ has real eigenvalues. Then 
	\begin{equation}\label{eq: lem1}
		\trace(AB\T C ) + \trace(A C\T B) = 2\Rtrace(AB\T C )
	\end{equation}
	and 
	\begin{equation}\label{eq: lem2}
		BAC\T + CAB\T = 2(BAC\T)_\Re .
	\end{equation}
\end{lemma}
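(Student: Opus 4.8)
The plan is to pass everything into the Fourier domain, where all three operators are simultaneously diagonalized. Since $A$, $B$, and $C$ are convolutional operators they share the common diagonalization $A = \F\inv \hat A \F$, $B = \F\inv \hat B \F$, $C = \F\inv \hat C \F$ with $\hat A, \hat B, \hat C$ diagonal. The single fact that makes the whole lemma work is that the transpose of a \emph{real} convolutional operator conjugates its Fourier eigenvalues: because $B$ is real we have $B\T = B^H = \F\inv \overline{\hat B} \F$ (using unitarity of $\F$ and that $\hat B$ is diagonal), and likewise for $C\T$. I would establish this correspondence first, since it is the engine for both identities.

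For (\ref{eq: lem1}), I would substitute these representations to get $AB\T C = \F\inv \hat A \overline{\hat B}\hat C \F$, a product of diagonal matrices, so that $\trace(AB\T C) = \sum_k \hat a_k \overline{\hat b_k}\hat c_k$, and similarly $\trace(AC\T B) = \sum_k \hat a_k \overline{\hat c_k}\hat b_k$. Adding these and using $\overline{\hat b_k}\hat c_k + \hat b_k\overline{\hat c_k} = 2\Re(\overline{\hat b_k}\hat c_k)$ collapses the sum. The hypothesis that $A$ has real eigenvalues enters exactly here: because each $\hat a_k \in \R$, the factor $\hat a_k$ may be pulled inside the real part, giving $\sum_k 2\Re(\hat a_k \overline{\hat b_k}\hat c_k) = 2\Re(\trace(AB\T C)) = 2\Rtrace(AB\T C)$ by the definition in (\ref{eq: Rdef2}).

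Equation (\ref{eq: lem2}) I would prove at the level of the diagonalized operators rather than their traces. The same substitution gives $BAC\T = \F\inv \hat B \hat A \overline{\hat C}\F$ and $CAB\T = \F\inv \hat C\hat A\overline{\hat B}\F$, both diagonalized by $\F$, so their sum has Fourier eigenvalues $\hat a_k(\hat b_k\overline{\hat c_k} + \hat c_k\overline{\hat b_k}) = \hat a_k \cdot 2\Re(\hat b_k\overline{\hat c_k})$. Again using $\hat a_k\in\R$ this equals $2\Re(\hat a_k\hat b_k\overline{\hat c_k})$, which is precisely twice the $k$-th eigenvalue of $(BAC\T)_\Re$ as defined by (\ref{eq: Rdef2}). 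Since both $BAC\T + CAB\T$ and $(BAC\T)_\Re$ are diagonalized by the same $\F$ and their eigenvalues agree up to the factor $2$, the matrices are equal.

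The only genuinely delicate step is the transpose--conjugate correspondence and the bookkeeping that the real-eigenvalue assumption on $A$ is exactly what licenses commuting $\hat a_k$ past $\Re(\cdot)$; everything else is routine manipulation of diagonal matrices together with the elementary identity $z + \bar z = 2\Re z$. I would also note in passing that realness of $A$ cannot be dropped: if some $\hat a_k$ had nonzero imaginary part, the left-hand sides would in general fail to be real multiples of the stated real-part expressions.
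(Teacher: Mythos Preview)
Your proposal is correct and follows precisely the route the paper itself indicates: the paper does not actually write out a proof but merely says to ``use the concepts about convolutional operators introduced in equations (\ref{eq: Ahat})--(\ref{eq: traceExample}) and simply work through the details,'' leaving it as an exercise. Your argument---simultaneous diagonalization by $\F$, the identification $B\T = \F\inv\overline{\hat B}\F$ for real circulant $B$, and the reduction to $z+\bar z = 2\Re z$ with $\hat a_k\in\R$ licensing the pull inside $\Re(\cdot)$---is exactly that exercise carried out, so there is nothing to compare.
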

Equations (\ref{eq: lem1}) and (\ref{eq: lem2}) in Lemma \ref{lem: Rtrace} can be proven in similar fashion to one another. One simple route is to use the concepts about convolutional operators introduced in equations (\ref{eq: Ahat})-(\ref{eq: traceExample}) and simply work through the details. This is left as an exercise to the reader.

\begin{proof}[Proof of Theorem \ref{prop: main}]
	The basis of the proof is to evaluate the derivative of (\ref{eq: SURE-lin}) with respect to $\gamma$, i.e. evaluating (\ref{ptheta}). Using short-hand subscripts to denote derivatives, i.e. $f_\gamma := \tfrac{\rmd}{\rmd \gamma} f $, differentiating the trace term leads to
	{\small{
	\begin{equation}\label{eq: dtrace1}
		\begin{split}
			& \frac{\rmd}{\rmd \gamma } \trace (\M H\T H)\\
			& = \trace(\M_\gamma H\T H ) + \trace(\M H_\gamma\T H) + \trace(\M H\T H_\gamma) \\
			& = \trace(\M_\gamma H\T H ) + 2 \trace_\Re (\M H\T H_\gamma ),
		\end{split}
	\end{equation}
}}
	where the last line follows from Lemma \ref{lem: Rtrace}. According to Lemma \ref{Mlemma}, 
	$$
	\M_\gamma = - \M (H\T H_\gamma + H_\gamma\T H) \M ,
	$$
	hence combining this with Lemma \ref{lem: Rtrace} obtains
	{\small{
	\begin{equation}\label{eq: dtrace2}
		\begin{split}
			\trace(M_\gamma H\T H) 
			& = -\trace(\M (H\T H_\gamma + H_\gamma\T H) \M  H\T H)\\
			& = -2\Rtrace (\M H\T H_\gamma \M  H\T H) .
		\end{split}	
	\end{equation} 
}}
	Putting (\ref{eq: dtrace1}) and (\ref{eq: dtrace2}) together obtains
	\begin{equation}
		\begin{split}
			& \frac{\rmd}{\rmd \gamma } \trace (\M H\T H)\\
			& = 2   \Rtrace(-\M H\T H_\gamma \M H\T H + \M H\T H_\gamma ) \\
			& = 2 \Rtrace(\M H\T H_\gamma (I - \M H\T H )) \\
			& = 2 \Rtrace(\M H\T H_\gamma \M (\M^{-1} - H\T H))\\
			& = 2\lambda  \Rtrace(\M H\T H_\gamma \M T\T T)
		\end{split}
	\end{equation}
	Since every term in the trace above is a convolutional operator, they can be rearranged in any order and the equality still holds. This completes the details of the trace term.
	
	Next observe the normed term can be written as
	$$
	\| (H \M H\T- I)b \|_2^2 = b\T (H\M H\T - I)\T (H\M H\T - I)b,
	$$
	hence 
	\begin{equation}\label{eq: dNorm}
		\frac{\rmd}{\rmd\gamma}  \| H u - b\|_2^2 = 2  b\T (H\M H\T - I)\T (H\M H\T)_\gamma \, b
	\end{equation}
	Next the derivative of $(HMH\T)_\gamma$ needs to be sorted out. This is given by
	\begin{equation}\label{eq: dNorm2}
		\begin{split}
			& (HMH\T)_\gamma \\
			& = H_\gamma M H\T + HMH_\gamma\T + HM_\gamma H\T \\
			& = 2(HMH_\gamma\T)_\Re - H M(H\T H_\gamma + H_\gamma\T H) M H\T \\
			& = 2(HMH_\gamma\T)_\Re - 2H M (H_\gamma\T H)_\R M H\T \\
			& = 2 \left( (HMH_\gamma\T)_\Re - (H M H_\gamma\T)_\Re H M H\T  \right),
		\end{split}
	\end{equation}
	where the last line follows from extensions of Lemma \ref{lem: Rtrace}. The remaining details of the proof are completed by substituting the result from (\ref{eq: dNorm2}) into (\ref{eq: dNorm}), and then combining this with the trace result.
\end{proof}

\section*{Proof of Theorem \ref{lambda-theorem}}

\begin{proof}[proof of Theorem \ref{lambda-theorem}]
	Recall the Wiener filter solution is given by $u^*   = \M H\T b$, where $\M\inv =H\T H + \lambda T\T T$.  Substituting this into the norm in the SURE estimator and expanding leads to
	$$
	\| H u^* -b\|_2^2 = b\T (H \M H\T -I)^2 b
	$$
	Differentiating this with respect to $\lambda$ by using the above expression and Lemma \ref{Mlemma} leads to
	\begin{equation}\label{appendix1}
		\begin{split}
			& \frac{\rmd}{\rmd \lambda} \| H x^* - b \|_2^2 \\
			& = 2b\T (H \M H\T -I) \frac{\rmd}{\rmd \lambda} (H \M  H\T -I)  b\\
			& = 2b\T (I - H \M H\T ) ( H M T\T T M H\T) b \\
			& = 2b\T ( H M T\T T M H\T - H M H\T H M T\T T M H\T)b
		\end{split}
	\end{equation}
	Furthermore, making the substitution $H\T H = M\inv - \lambda T\T T$ further simplifies the expression to
	\begin{equation}\label{appendix3}
		\frac{\rmd}{\rmd \lambda} \| H u^* - b \|_2^2 
		= 2 \lambda b\T H \M T\T T \M T\T T M H\T b
	\end{equation}
	This is the first term needed.  Next, we need the derivative of the last term in (\ref{eq: SURE-lin})
	$$
	\frac{\rmd} {\rmd \lambda} \trace (H \M H\T) 
	$$
	This is straightforward application of Lemma \ref{Mlemma}, since the trace is a linear operator, hence
	\begin{equation}\label{appendix2}
		\begin{split}
			\frac{\rmd} {\rmd \lambda} \trace ( H \M H\T)  
			& = \trace (H  \frac{\rmd} {\rmd \lambda} M H\T ) \\
			& =  - \trace (H M T\T T M H\T ) \
		\end{split}
	\end{equation}
	Combining the results from (\ref{appendix3}) and (\ref{appendix2}) to evaluate the derivative of (\ref{eq: SURE-lin}) leads to
	\begin{equation}
		\begin{split}
			0 &=  2 \lambda b\T H M T\T T M T\T T M H\T b \\
			&\quad - 2\sigma^2  \trace (H \M T\T T M H\T ) 
		\end{split}
	\end{equation}
	Solving for $\lambda$ completes the proof.
\end{proof}

\section*{Second Order Partial Derivatives}
Here the second order derivatives of the SURE objective function are evaluated. These is provided for reference and can be used for a Newton-Raphson optimization scheme. 
To that end, let us differentiate the objective function with respect to two arbitrary PSF parameters, $\rho$ and $\gamma$, which we begin as
\begin{equation}\label{eq: 2nd1}
	\begin{split}
		& \frac{\rmd}{\rmd \rho}\frac{\rmd}{\rmd\gamma}
		\left( \| H u^* - b \|_2^2 + 2\sigma^2 \text{trace}(H M H\T ) \right)
		\\
		& = 	\frac{\rmd }{\rmd \rho} \big[
		4 \sigma^2 \lambda \cdot {\trace} (\M^2 V H\T H_\gamma ) \\
&		+  4b\T (H \M H\T- I)\T HM H_\gamma\T (I - H M H\T)b
		\big]
	\end{split}
\end{equation}
First the trace term in (\ref{eq: 2nd1}) is evaluated:
{\scriptsize{
\begin{equation}
	\begin{split}
		& \frac{\rmd}{\rmd \rho} \trace(M^2 H\T H_\gamma T\T T) \\
		& = \trace((2M M_\rho H\T H_\gamma + M^2 H_\rho\T H_\gamma + M^2 H\T H_{\gamma \rho} ) T\T T )\\
		& = \trace((-4 M^2 H_\rho \T H M H\T H_\gamma + M^2 H_\rho\T H_\gamma  + M^2 H\T H_{\gamma \rho})T\T T)\\
		& = \trace(M^2 T\T T (-4 M H\T H H_\rho\T H_\gamma  + H_\rho\T H_\gamma + H\T H_{\gamma \rho})) .
	\end{split}
\end{equation}
}}

Next the quadratic term is evaluated. Begin by defining $A := (HMH\T - I)$, in which case the quadratic term differentiation simplifies to (ignoring the constant, 4)
\begin{equation}\label{eq: dQ1}
	\begin{split}
		- & \frac{\rmd}{\rmd \rho } b\T A\T H \M H_\gamma\T A y \\
		& = -2 b\T A_\rho\T HM H_\gamma\T A b - b\T A\T \rmd_\rho (HM H_\gamma\T ) A b
	\end{split}
\end{equation}
and $A_\rho$ is evaluated as
\begin{equation}
	\begin{split}
		A_\rho 
		& = H_\rho M H\T + H M_\rho H\T + HM H_\rho\T \\
		& = 2 H_\rho M H\T - 2 H M H\T H_\rho M H\T \\
		& = 		2(I - HMH\T ) H_\rho  M H\T \\
		& = -2 A H_\rho M H\T ,
	\end{split}
\end{equation}
and finally the last term to simplify is
{\small{
\begin{equation}
	\begin{split}
		\rmd_\rho (HM H_\gamma\T ) 
		& = H_\rho M H_\gamma\T + H M_\rho H_\gamma + HM H_{\gamma \rho}\T \\
		& = H_\rho M H_\gamma\T - 2 H M H\T H_\rho M H_\gamma\T + HM H_{\gamma \rho}\T\\
		& =: B(\gamma,\rho) .
	\end{split}
\end{equation}
}}
Hence, (\ref{eq: dQ1}) simplifies to
\begin{equation}\label{eq: dQ2}
	\begin{split}
		- & \frac{\rmd}{\rmd \gamma }b\T A\T H \M H_\gamma\T A b \\
		& = 4 b\T A\T (HM H_\rho\T) (HM H_\gamma\T) A b -b\T A\T  B(\gamma,\rho) A b
	\end{split}
\end{equation}

Putting it all together, the complete 2nd order derivative is given by
{\scriptsize{
\begin{equation}
	\begin{split}
		&	\frac{\rmd}{\rmd \rho}	\frac{\rmd}{\rmd \gamma} \left( \| H u^* - b \|_2^2 + 2\sigma^2 \text{trace}(H M H\T ) \right)
		\\
		& = 4\sigma^2 \lambda\trace\left( M^2 T\T T \left( -4 M H\T H H_\rho\T H_\gamma  + H_\rho\T H_\gamma + H\T H_{\gamma \rho}\right)\right) \\
		& + 16 b\T A\T (HM H_\rho\T) (HM H_\gamma\T) A b - 4 b\T A\T  B(\gamma,\rho) A b.
	\end{split}
\end{equation}
}}

\section*{Optimizing over the regularization}
Suppose the regularization operator $T\T T$ is a function of an arbitrary parameter $r$ that we also want to optimize. For example, when $T\T T$ is the circulant $r$th order finite difference, then
$$
T\T T = \F^{-1} |\hat T|^2 \F,
$$
where $|\hat T|^2$ is diagonal with
$$
|\hat T_{jj}|^2 = \sin^{2r}(\pi (j-1)/N),
$$
hence
$$
\frac{\rmd}{\rmd r} | \hat T_{jj} |^2 = \log\left(( \sin^{2}(\pi (j-1)/N) \right) \cdot \sin^{2r}(\pi (j-1)/N)
$$

Then differentiating the trace term in (\ref{eq: SURE-lin}) with respect to $r$ leads to
\begin{equation}
	\begin{split}
		\frac{\rmd}{\rmd r} \trace (M H\T H )
		& = \trace (M_r H\T H) \\
		& = -\lambda \trace(M (T\T T)_r M H\T H)
	\end{split}
\end{equation}

Differentiating the quadratic term in (\ref{eq: SURE-lin}) and recalling $u^* = MH\T b$, leads to
{\small{
\begin{equation}
	\begin{split}
	      \frac{\rmd}{\rmd r} \| Hu^* - b\|_2^2 
		& = \frac{\rmd}{\rmd r} \| (H M H\T -I)b \|_2^2\\
		& = \frac{\rmd}{\rmd r} b\T (H MH\T -I)^2 b \\
		& = 2 b\T (HMH\T - I) H M_r H\T b \\
		& = -2 \lambda b\T (HMH\T - I) M (T\T T)_r M H\T b,
	\end{split}
\end{equation}
}}
hence the full derivative of (\ref{eq: SURE-lin}) with respect to $r$ is
\begin{equation}
	\begin{split}
		\frac{\rmd}{\rmd r} \text{SURE}(u^*) 
		&= -2\lambda \sigma^2  \trace(M (T\T T)_r M H\T H) \\
		&-2 \lambda b\T (HMH\T - I) M (T\T T)_r M H\T b
	\end{split}
\end{equation}

\section*{Derivatives of the Laplacian PSF}
The derivatives of the angled Laplacian PSF parameters are provided here for reference. This PSF was defined in (\ref{eq: LapPSFangle}). These derivatives are necessary for the fixed point iterations used in the simulations in Section \ref{sec: LapResults}. Defining $(k_x^\theta , k_y^\theta) = (k_x , k_y)Q_\theta\T$, then the derivatives of $\hat g$ with respect to the PSF parameters are
{\footnotesize{
		\begin{equation}\label{eq: Ltheta}
			\begin{split}
				\frac{\partial}{\partial \alpha_x} \hat g (k_x, k_y) 
				& = -\sin^2 (\pi k_x^\theta) \left( 1 +  \alpha_x \sin^2 (\pi k_x^\theta)\right)^{-2} \cdot \\
				& \quad \left(1 +  \alpha_y \sin^2 (\pi k_y^\theta)\right)^{-1}\\
				\frac{\partial}{\partial \alpha_y} \hat g (k_x, k_y) 
				& = -\sin^2 (\pi k_y^\theta) \left( 1 +  \alpha_x \sin^2 (\pi k_x^\theta)\right)^{-1}\cdot \\
				& \quad \left(1 +  \alpha_y \sin^2 (\pi k_y^\theta)\right)^{-2} \\
				\frac{\partial}{\partial \theta} \hat g (k_x, k_y) 
				& = -2 {\pi k_y^\theta}   \alpha_x  \sin(\pi k_x^\theta) \cos(\pi k_x^\theta) \cdot \\
				& \left( 1 +  \alpha_x \sin^2 (\pi k_x^\theta)\right)^{-2} \left(1 +  \alpha_y \sin^2 (\pi k_y^\theta)\right)^{-1} \\
				& + 2 {\pi k_x^\theta}   \alpha_y  \sin(\pi k_y^\theta) \cos(\pi k_y^\theta)\cdot \\
				& \left( 1 +  \alpha_x \sin^2 (\pi k_x^\theta)\right)^{-1} \left(1 +  \alpha_y \sin^2 (\pi k_y^\theta)\right)^{-2}.
			\end{split}
		\end{equation}
}}

\section*{Proof of Proposition \ref{prop: Lap}}
\begin{proof}[proof of Proposition \ref{prop: Lap}]
	Let $f\in \R^{n}$ be given by
	$$
	f(x) = e^{- \beta |x|}, \quad \text{for} \quad x = -n/2, -n/2 + 1 , \dots , n/2-1.
	$$
	Then
	\begin{equation}
		\begin{split}
			\hat f (k ) 
			& = \sum_{x=-n/2}^{n/2-1} e^{-\beta |x|} e^{-i2\pi k x /n}\\
		\end{split}
	\end{equation}
	The sum may be rewritten to go to infinity, while only introducing an error of $O(e^{-\beta n/2})$. This leads to 
	\begin{equation}
		\begin{split}
			& \hat f(k)  + O(e^{-\beta n/2}) \\
			& = \sum_{x=-\infty}^{\infty} e^{-\beta |x| } e^{-i2\pi k x /n} \\
			& = -1 + \sum_{x=0}^\infty e^{-\beta x} \left( e^{-i2\pi k x /n} + e^{i2\pi kx /n} \right) \\
			& = -1 + \frac{1}{1-e^{-(\beta +i2\pi k / n)}} + \frac{1}{1-e^{-(\beta -i2\pi k / n)}}
		\end{split}
	\end{equation}
	To simplify this, first work out the common denominator and simplify:
	\begin{equation}
		\begin{split}
			& (1-e^{-(\beta +i2\pi k / n)})(1-e^{-(\beta -i2\pi k / n)}) \\
			& = 1+ e^{-2 \beta} - e^{-\beta} (e^{-i2\pi k /n} + e^{i2\pi k /n })\\
			& = 1 + e^{-2\beta} - e^{-\beta} \cdot 2 \cdot \cos\left(2 \pi k / n\right) \\
			& = 1 + e^{-2\beta} - 2e^{-\beta} + 2e^{-\beta} - e^{-\beta} \cdot 2 \cdot \cos\left(2 \pi k / n\right) \\
			& = (1-e^{-\beta})^2 + 4e^{-\beta} \sin^2 \left( \pi k / n\right) 
		\end{split}
	\end{equation}
	The numerator is simplified in a similar fashion:
	\begin{equation}
		\begin{split}
			& (1-e^{-(\beta +i2\pi k / n)})+ (1-e^{-(\beta -i2\pi k / n)})\\
			& = 2 - e^{-\beta} (e^{-i2\pi k/n} + e^{i2\pi k/n})\\
			& = 2 - 2e^{-\beta} \cos(2\pi k/n ) \\
			& = 2 - 2e^{-\beta}  + 2e^{-\beta} - 2e^{-\beta} \cos(2\pi k/n )\\
			& = 2(1-e^{-\beta}) + 4 e^{-\beta} \sin^2 (\pi k /n)
		\end{split}
	\end{equation}
	Putting it all together obtains
	{\small{
	\begin{equation}
		\begin{split}
			& \hat f(k) + O(e^{-\beta n/2}) \\
			& = 
			-1 + \frac{2(1-e^{-\beta}) + 4 e^{-\beta} \sin^2 (\pi k /n)}{(1-e^{-1})^2 + 4e^{-\beta} \sin^2 \left( 2 \pi k / n\right)}\\
			& = \frac{2(1-e^{-\beta}) - (1-e^{-\beta})^2 }{(1-e^{-\beta})^2 + 4e^{-\beta} \sin^2 \left(  \pi k / n\right)}\\
			& = \frac{1-e^{-2\beta}  }{(1-e^{-\beta})^2 + 4e^{-\beta} \sin^2 \left(  \pi k / n\right)} \\
			& = C \frac{1}{1 + 4 \alpha \sin^2 \left(  \pi k / n\right) },
		\end{split}
	\end{equation}
}}
	where 
	$$C = \frac{1-e^{-2\beta}}{(1-e^{-\beta})^2}, \quad \alpha = \frac{e^{-\beta}}{(1-e^{-\beta})^2}.$$
	In the proposition, $h$ is the normalized version of $f$, which implies $\hat h(0) = 1$. Substituting this condition in completes the proof.
\end{proof}


\begin{thebibliography}{10}
	
	\bibitem{barzilai1988two}
	J.~Barzilai and J.~M. Borwein.
	\newblock Two-point step size gradient methods.
	\newblock {\em IMA journal of numerical analysis}, 8(1):141--148, 1988.
	
	\bibitem{cho2009fast}
	S.~Cho and S.~Lee.
	\newblock Fast motion deblurring.
	\newblock In {\em ACM SIGGRAPH Asia 2009 papers}, pages 1--8. 2009.
	
	\bibitem{dabov2008image}
	K.~Dabov, A.~Foi, V.~Katkovnik, and K.~Egiazarian.
	\newblock Image restoration by sparse 3{D} transform-domain collaborative
	filtering.
	\newblock In {\em Image Processing: Algorithms and Systems VI}, volume 6812,
	page 681207. International Society for Optics and Photonics, 2008.
	
	\bibitem{eldar2008generalized}
	Y.~C. Eldar.
	\newblock Generalized {SURE} for exponential families: Applications to
	regularization.
	\newblock {\em IEEE Transactions on Signal Processing}, 57(2):471--481, 2008.
	
	\bibitem{fergus2006removing}
	R.~Fergus, B.~Singh, A.~Hertzmann, S.~T. Roweis, and W.~T. Freeman.
	\newblock Removing camera shake from a single photograph.
	\newblock In {\em ACM SIGGRAPH 2006 Papers}, pages 787--794. 2006.
	
	\bibitem{holmes1992blind}
	T.~J. Holmes.
	\newblock Blind deconvolution of quantum-limited incoherent imagery:
	maximum-likelihood approach.
	\newblock {\em JOSA A}, 9(7):1052--1061, 1992.
	
	\bibitem{nesterov2003introductory}
	Y.~Nesterov.
	\newblock {\em Introductory lectures on convex optimization: A basic course},
	volume~87.
	\newblock Springer Science \& Business Media, 2003.
	
	\bibitem{paxman1992joint}
	R.~G. Paxman, T.~J. Schulz, and J.~R. Fienup.
	\newblock Joint estimation of object and aberrations by using phase diversity.
	\newblock {\em JOSA A}, 9(7):1072--1085, 1992.
	
	\bibitem{romano2017little}
	Y.~Romano, M.~Elad, and P.~Milanfar.
	\newblock The little engine that could: Regularization by denoising ({RED}).
	\newblock {\em SIAM Journal on Imaging Sciences}, 10(4):1804--1844, 2017.
	
	\bibitem{sanders2019notes}
	T.~Sanders.
	\newblock Notes on automated parameter selection for regularization methods in
	image and signal reconstruction.
	\newblock 2019.
	
	\bibitem{sanders2021real}
	T.~Sanders, R.~Hedges, T.~Schulz, M.~Abijaoude, J.~Peters, M.~Steinbock,
	A.~Arreola, and T.~Holmes.
	\newblock Real time deconvolution of adaptive optics ground based telescope
	imagery.
	\newblock {\em The Journal of the Astronautical Sciences}, pages 1--17, 2021.
	
	\bibitem{sanders2021new}
	T.~Sanders and S.~Larkin.
	\newblock New computational techniques for a faster variation of {BM3D} image
	denoising.
	\newblock {\em arXiv preprint arXiv:2103.10765}, 2021.
	
	\bibitem{sanders2018multiscale}
	T.~Sanders and R.~B. Platte.
	\newblock Multiscale higher-order {TV} operators for {L}1 regularization.
	\newblock {\em Advanced structural and chemical imaging}, 4(1):1--18, 2018.
	
	\bibitem{sanders2020effective}
	T.~Sanders, R.~B. Platte, and R.~D. Skeel.
	\newblock Effective new methods for automated parameter selection in
	regularized inverse problems.
	\newblock {\em Applied Numerical Mathematics}, 152:29--48, 2020.
	
	\bibitem{sreehari2016plug}
	S.~Sreehari, S.~V. Venkatakrishnan, B.~Wohlberg, G.~T. Buzzard, L.~F. Drummy,
	J.~P. Simmons, and C.~A. Bouman.
	\newblock Plug-and-play priors for bright field electron tomography and sparse
	interpolation.
	\newblock {\em IEEE Transactions on Computational Imaging}, 2(4):408--423,
	2016.
	
	\bibitem{tibshirani2015stein}
	R.~Tibshirani and L.~Wasserman.
	\newblock Stein’s unbiased risk estimate.
	\newblock {\em Course notes from “Statistical Machine Learning}, pages 1--12,
	2015.
	
	\bibitem{wang2013scaled}
	H.~Wang and P.~C. Miller.
	\newblock Scaled heavy-ball acceleration of the {R}ichardson-{L}ucy algorithm
	for 3{D} microscopy image restoration.
	\newblock {\em IEEE Transactions on Image Processing}, 23(2):848--854, 2013.
	
	\bibitem{wiener1964extrapolation}
	N.~Wiener et~al.
	\newblock {\em Extrapolation, interpolation, and smoothing of stationary time
		series: with engineering applications}, volume~8.
	\newblock MIT press Cambridge, MA, 1964.
	
	\bibitem{xue2014novel}
	F.~Xue and T.~Blu.
	\newblock A novel {SURE}-based criterion for parametric {PSF} estimation.
	\newblock {\em IEEE Transactions on Image Processing}, 24(2):595--607, 2014.
	
	\bibitem{zhang2017learning}
	K.~Zhang, W.~Zuo, S.~Gu, and L.~Zhang.
	\newblock Learning deep {CNN} denoiser prior for image restoration.
	\newblock In {\em Proceedings of the IEEE conference on computer vision and
		pattern recognition}, pages 3929--3938, 2017.
	
\end{thebibliography}

\end{document}